\title{$\R(K_{\aleph_0}, \hat{K}_{2,3})$ is a win for Player 1}
\author[N.~Bowler]{Nathan Bowler}
\author[H.~Ortm{\"u}ller]{Henri Ortm{\"u}ller}
\email{nathan.bowler@uni-hamburg.de, henri.ortmueller@unifr.ch}
\date{\today}
\newcommand{\N}{\mathbb{N}} %
\newcommand{\R}{\mathcal{R}} %
\newtheorem{theorem}{Theorem}[section] 
\newtheorem{lemma}[theorem]{Lemma}
\theoremstyle{definition}
\begin{document}

\begin{abstract}
    The \emph{Strong Ramsey game} $\R(B,G)$ is a two player game with players $P_1$ and $P_2$, where $B$ and $G$ are $k$-uniform hypergraphs for some $k \geq 2$. $G$ is always finite, while $B$ may be infinite. $P_1$ and $P_2$ alternately color uncolored edges $e \in B$ in their respective color and $P_1$ begins. Whoever completes a monochromatic copy of $G$ in their own color first, wins the game. If no one claims a monochromatic copy of $G$ in a finite number of moves, the game is declared a draw. For a $t \in \N$, let $\hat{K}_{2,t}$ denote the $K_{2,t}$ together with the edge connecting the two vertices in the partition class of size 2. The purpose of this paper is to give a winning strategy for $P_1$ in the game $\R(K_{\aleph_0}, \hat{K}_{2,3})$.
\end{abstract}

\maketitle

\section{Introduction}
One of the first people studying Strong Ramsey games was Beck. In \cite{BeckRamseyGames2002}, he raised the question, whether $P_1$ has a winning strategy in $\R(K_{\aleph_0}, K_k)$ for all $k \geq 4$ and $\aleph_0$ being the smallest infinite cardinal. Note that $P_2$ does not have a winning strategy in any Strong Ramsey game, due to a strategy stealing argument. For a proof, see for example \cite[Theorem 1.3.1]{MR3524719}. Over the last 20 years, only in the case of $k=4$, Bowler and Gut \cite{bowler2023k4game} proved that $P_1$ has a winning strategy. After proving for a 5-uniform hypergraph, that $P_2$ can reach a draw in corresponding Strong Ramsey game on the complete 5-uniform board, if both players play optimal, Hefetz, Kusch, Narins, Pokrovskiy, Requil\'e and Sarid \cite{MR3645576} suggested to take an intermediary step by asking the following question.
\begin{align*}
    \text{Does there exist a graph $G$, such that $\R(K_{\aleph_0},G)$ is a draw?}
\end{align*}
In 2020, David, Hartarsky and Tiba \cite{MR4073378} improved their result by showing for a $4$-uniform version of $\hat{K}_{2,4}$, that $P_2$ has a drawing strategy in the corresponding Strong Ramsey game. Due to the similarity of their graph and the $\hat{K}_{2,4}$, they raised the question, whether $\R(K_{\aleph_0}, \hat{K}_{2,4})$ is a draw.
Since giving a winning strategy for $P_1$ in $\R(K_{\aleph_0}, \hat{K}_{2,1})$ and $\R(K_{\aleph_0}, \hat{K}_{2,2})$ is straightforward, by proving that $P_1$ has a winning strategy for $\R(K_{\aleph_0}, \hat{K}_{2,3})$, we verify that $\hat{K}_{2,4}$ would be a minimal example, if $\R(K_{\aleph_0},\hat{K}_{2,4})$ was a draw.

\section{Notation}
All graphs considered in this paper are undirected, simple graphs. Hence, we denote an edge $\{x,y\}$ as $xy$ instead for clarity of the presentation.
In the Strong Ramsey game $\R(B,G)$, we will call $B$ the \textit{board} and $G$ the \textit{target graph}. We refer to $P_1$ as \textit{she} and $P_2$ as \textit{he} and we will often say that a player claims, picks or takes an edge instead of colors it in their respective color.
Speaking of colors: For better visualisation, we are going to provide the reader with figures of specific board states. For that we will use the color violet only for edges of $P_1$ and blue only for edges of $P_2$ and in all of those figures, it will be $P_1$'s turn. In order to avoid cases, we do not specify all edges $P_2$ has claimed at a given board state. In the corresponding figure, we denote this by $+k$, if exactly $k$ of $P_2$'s edges are not drawn. If there is an edge $e^*$ with special properties among one of those $k$ edges, we will denote this with $+e^*$ in the figure.
To keep the paper concise, we will not define \emph{strategy} here. For a formal definiton, we refer the reader to \cite[Appendix C]{CombGamesBeck}.

For a given target graph $G$, we say that $P_1$ has a \textit{threat}, if $P_1$ has claimed a copy $H \subseteq B$ of $G-e$ for an edge $e \in E(G)$, such that the corresponding $\hat{e} \in B$ is not yet claimed by either player. We will call $H$ a \textit{threat graph}. For a vertex $x$, $d_{P_1}(x)$ denotes the number of edges $P_1$ has claimed, which are incident to $x$. The definitions above are analogous for $P_2$. If $d_{P_1}(x) = d_{P_2}(x) = 0$, we say the vertex $x$ is \emph{fresh}. In addition, we will refer to the vertices of degree $t+1$ in $\hat{K}_{2,t}$ as \emph{main vertices}.

\section{The proof}

\begin{theorem}
    \label{thm:K23}
    $P_1$ has a winning strategy in $\R(K_{\aleph_0}, \hat{K}_{2,3})$.
\end{theorem}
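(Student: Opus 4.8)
The plan is to have $P_1$ win by reaching a \emph{fork}: a single move of hers that simultaneously creates two threats whose completing edges are distinct and both still free, at a moment when $P_2$ has no live threat of his own. Once such a position is reached it is $P_2$'s turn, he can block at most one of the two completing edges, and $P_1$ finishes a monochromatic $\hat{K}_{2,3}$ on her next move. All of $P_1$'s forks will be organised around a fixed pair of main vertices $a,b$ with $ab$ claimed by her: if she owns the $\hat{K}_{2,2}$-core $ab,ac_1,bc_1,ac_2,bc_2$ together with two half-spokes $ac_3,ac_4$ to fresh vertices $c_3,c_4$ with $bc_3,bc_4$ free, then the copies on $\{a,b,c_1,c_2,c_3\}$ and $\{a,b,c_1,c_2,c_4\}$ are both threat graphs, with the distinct free completing edges $bc_3$ and $bc_4$. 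The same bookkeeping shows that playing the single core-edge $bc_2$ into the position $ab,ac_1,bc_1,ac_2,ac_3,ac_4$ is itself a forking move, and this is the form of fork I would aim to deliver.

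First I would describe $P_1$'s strategy in phases. In the \emph{setup phase} she claims $ab$ and assembles the core $\hat{K}_{2,2}$ on fresh common neighbours; whenever $P_2$ occupies one of her intended spokes she reroutes the affected common neighbour to a new fresh vertex, always available since the board is $K_{\aleph_0}$. In the \emph{forcing phase} she exploits the finished core: each further half-spoke $ac_j$ to a fresh vertex is a threat with completing edge $bc_j$, so $P_2$ must answer at $bc_j$ or lose at once. These forcing threats are the engine of the argument, since they consume $P_2$'s moves on defence and prevent him from accumulating six on-structure edges of his own. In the \emph{fork phase} $P_1$ times a half-spoke stash so that, on the move she completes the core, two half-spokes with free completing edges coexist, and she springs the fork described above.

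Throughout I would maintain a tempo invariant guaranteeing that $P_2$ never completes $\hat{K}_{2,3}$ before $P_1$ and has no live threat at the instant $P_1$ springs her fork. The justification is a counting argument resting on $P_1$ moving first and on the forcing phase: since answering each of $P_1$'s single threats costs $P_2$ one move, he cannot simultaneously defend all of them and build the six edges that a threat of his own would require. Whenever $P_2$ does manufacture a threat, $P_1$ either already has a fork available and wins, or blocks the threat on a fresh vertex and resumes her plan without net loss of tempo.

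The hard part will be the interaction analysis that makes the fork phase actually go through. The obstacle is that a stashed half-spoke $ac_3$ is not yet a threat while the core is incomplete, so $P_2$ is free to neutralise it immediately by claiming $bc_3$; a naive sequence therefore lets $P_2$ kill each half-spoke as it appears, and the double threat never materialises. The resolution is to stash half-spokes only under the cover of an independent forcing threat, so that $P_2$'s forced reply lies at a completing edge other than $bc_3,bc_4$, leaving those free at the forking moment. Verifying that such cover is always available, and doing so against every one of $P_2$'s forced-versus-free replies, is the technical core of the proof; it is carried out by a case analysis on $P_2$'s moves near $a$ and $b$, with the infinite supply of fresh vertices used repeatedly to reroute around any local interference.
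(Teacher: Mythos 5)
Your endgame is sound and close in spirit to the paper's: a completed $\hat{K}_{2,2}$ is indeed the launching pad, single threats to fresh vertices consume $P_2$'s tempo, and the game ends with a double threat he cannot block (this is essentially \Cref{lem:EndPosition}, though the paper's fork uses a different pair of main vertices and is reached purely through forcing moves). The genuine gap is everything before that point. A threat graph for $\hat{K}_{2,3}$ has six edges, so $P_1$ has no tactical threat of any kind until her sixth move; consequently your entire setup phase hands $P_2$ five or six completely unforced moves. Your proposed fix for the stashed half-spokes --- playing them ``under the cover of an independent forcing threat'' --- is unavailable precisely when you need it: before the core $\hat{K}_{2,2}$ is complete there are no forcing threats to provide cover. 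In those free moves $P_2$ can simultaneously occupy your intended spokes (each reroute costs you a tempo while his interfering edge can serve his own construction) and assemble a $\hat{K}_{2,2}$ plus half-spoke of his own, giving him a live threat by his sixth move; he then wins the race when you spring the fork on your seventh. The ``tempo invariant'' and the accompanying counting argument are therefore not a routine verification to be deferred --- they are the whole difficulty, and as stated they are false for the strategy you describe. The fallback ``block his threat and resume without net loss of tempo'' is likewise unsubstantiated: blocking costs a move that does not advance your structure, and there is no reason a single block restores parity.

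What is missing is a mechanism for forcing $P_2$'s \emph{early} moves, before any $\hat{K}_{2,3}$ is imminent. The paper achieves this with \emph{strategic} threats: it first proves that certain small intermediate positions (a triangle in the first three moves, as in \Cref{lem:triangle}; the four-edge configuration of \Cref{lem:Mainlem}) are already winning for $P_1$, so that $P_2$ is compelled to spend his opening moves preventing them even though no copy of the target is close to completion. It must then also verify, by bounding $P_2$'s edge count and a case analysis, that none of $P_2$'s forced defensive replies doubles as the completion of a threat for him --- a danger your counting argument does not address, since the worry is not that he lacks the moves to build six edges but that his forced blocks \emph{are} those six edges. Without an analogue of these ingredients, your proof does not go through.
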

The rough idea of the proof is to analyze the game in reversed order compared to the order the game is played in. We will begin by proving a lemma, which shows that, if $P_1$ has reached a board state, where she has almost claimed a copy of $\hat{K}_{2,3}$, she can win the game.
\begin{lemma}
    \label{lem:EndPosition}
    In $\R(K_{\aleph_0}, \hat{K}_{2,3})$, suppose $P_1$ has built a $\hat{K}_{2,2}$, such that
    \begin{enumerate}
        \item  $P_2$ does not have a threat;
        \item  $P_2$ has claimed at most 7 edges of the board.
    \end{enumerate}
    If it is $P_1$'s turn, she has a winning strategy.
\end{lemma}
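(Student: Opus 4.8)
The plan is to exploit the fact that $P_1$'s $\hat{K}_{2,2}$, on main vertices $a,b$ and outer vertices $c_1,c_2$, is exactly a $K_4$ with the single edge $c_1c_2$ removed. If $P_1$ can first claim $c_1c_2$, she owns a complete $K_4$ on $\{a,b,c_1,c_2\}$, and now \emph{every} edge of this $K_4$ lies in two triangles, so each of the pairs $\{a,b\}$, $\{a,c_1\}$, $\{a,c_2\}$ can serve as the main vertices of a $\hat{K}_{2,2}$. The winning move is then a single spoke $az$ to a fresh vertex $z$: this creates three threat graphs at once, with main pairs $\{a,b\}$, $\{a,c_1\}$, $\{a,c_2\}$ and respective completing edges $bz$, $c_1z$, $c_2z$. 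Since $z$ is fresh, all three completing edges are unclaimed and pairwise distinct, so $P_2$ can block at most one of them; $P_1$ then claims another and completes a $\hat{K}_{2,3}$, winning. (A double threat already suffices, but the $K_4$ conveniently gives three.)

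The catch is one of \emph{tempo}. Claiming $c_1c_2$ only completes a $K_4$, which has four vertices and hence contains no copy of $\hat{K}_{2,3}-e$; so this preparatory move is not a threat, and $P_2$ gets to reply before $P_1$ plays the winning spoke. First I would record the easy branch: if $P_2$'s reply does not create a threat, then $P_1$ immediately plays $az$ for a fresh $z$ chosen outside $P_2$'s finitely many edges, and wins as above. The dangerous branch is that $P_2$ uses this one free move to build a threat of his own. Then $P_1$ is forced to defend instead of playing the spoke, and a live $P_2$-threat would mature one move before $P_1$'s spoke does, so $P_1$ cannot simply ignore it.

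Hence the hard part will be to show that $P_2$ cannot manufacture a threat during this single window, and this is where hypotheses (1) and (2) must be used. Before $P_1$ moves, $P_2$ has no threat, i.e.\ no copy of $\hat{K}_{2,3}-e$, and at most $7$ edges; the move $c_1c_2$ does not change $P_2$'s edges. If a single further edge $f$ of $P_2$ produced a copy of $\hat{K}_{2,3}-e$, that copy would have to use $f$, so $P_2$ would already own a copy of $\hat{K}_{2,3}$ with two edges deleted, one of them being $f$ and the other, $\hat{e}$, still unclaimed. I would enumerate the few shapes of such a ``$\hat{K}_{2,3}$ minus two edges'' and rule each out, the edge bound $7$ being exactly what keeps this case list finite and what guarantees that $P_1$ always retains a fresh vertex $z$ for the winning spoke. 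If some configuration cannot be excluded outright, the remedy is for $P_1$ to spend one or two preliminary forcing moves first, playing spokes $au$ to fresh vertices $u$ (each of which $P_2$ must block at $bu$), thereby exhausting $P_2$'s latent near-threats before committing the non-forcing move $c_1c_2$; bounding the number of such latent configurations by the $7$-edge budget is the crux of the argument.
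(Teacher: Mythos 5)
Your primary strategy has a tempo failure that is not a technicality but a genuine loss of the game, and the hypotheses of the lemma do not exclude it. Take $P_2$ to have claimed a $\hat{K}_{2,2}$ with main vertices $u,v$ and leaves $p,q$ on five vertices disjoint from $\{a,b,c_1,c_2\}$: this is only $5 \leq 7$ edges and is not a threat (a threat graph has six edges), so conditions (1) and (2) hold. After your non-forcing move $c_1c_2$, $P_2$ plays $uw$ for a fresh $w$ and now owns $\hat{K}_{2,3}$ minus the unclaimed edge $vw$ --- a live threat. If $P_1$ plays the spoke $az$, $P_2$ completes $vw$ and wins first; so $P_1$ must take $vw$, whereupon $P_2$ plays $uw'$ for another fresh $w'$ and repeats. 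Your main line therefore never gets to fire the triple threat, and the ``easy branch'' is not the generic case. You correctly identify this as the crux, but the enumeration of ``$\hat{K}_{2,3}$ minus two edges'' shapes cannot succeed, since the example above shows such a configuration genuinely exists under the stated hypotheses; the gap is not closable along that route.

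Your fallback sketch --- forcing spokes $au$ that $P_2$ must answer at $bu$ --- is in fact the paper's entire proof, but the part you leave unexecuted is exactly where hypothesis (2) is needed, and it is not about keeping a case list finite or finding fresh vertices (the latter is automatic on $K_{\aleph_0}$). The danger is that $P_2$'s \emph{forced block} $bu$ might itself complete a threat for him, costing $P_1$ her tempo. The paper rules this out by observing that the $7$-edge bound forces at least one main vertex of $P_1$'s $\hat{K}_{2,2}$, say $b$, to not be a main vertex of any $P_2$-claimed $\hat{K}_{2,2}$; $P_1$ then plays all her spokes from the \emph{other} main vertex $a$, so that every forced block lands on $b$ and provably cannot create a threat. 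Three such forced spokes $az_1, az_2, az_3$ (blocked at $bz_1, bz_2, bz_3$) set up a double threat via $c_1z_1$ then $c_1z_2$ or $c_1z_3$, with no need to ever claim $c_1c_2$. Without the choice of safe main vertex and the verification that the blocks are harmless, the proof is incomplete.
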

\begin{proof}
    Let $x,y$ be the main vertices of the $\hat{K}_{2,2}$, which $P_1$ has claimed. Observe first that our second condition ensures that at least one of $x, y$ is not a main vertex of a copy of $\hat{K}_{2,2}$, which $P_2$ has claimed. ($x$ and $y$ cannot be main vertices of the same $\hat{K}_{2,2}$, since $xy$ is taken by $P_1$.) Without loss of generality, assume that $P_2$ has not claimed a copy of $\hat{K}_{2,2}$ with main vertex $x$. Then, $P_1$ claims $yz_1$ for a fresh vertex $z_1$.
    \begin{figure}[H]
        \centering
        \includegraphics[width=0.33\textwidth]{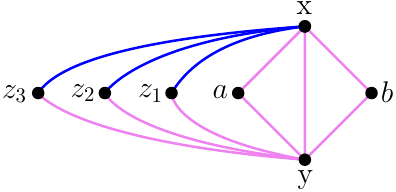}
        \caption{Parts of the board after $P_2$ blocked three times.}
        \label{pic:finalwin}
    \end{figure}
    Since $P_2$ does not have a threat, $P_2$ has to block $P_1$'s threat by taking $xz_1$. Assume for a contradiction that this blockade yields $P_2$ a threat graph $H$. Then $xz_1 \in E(H)$ and since $d_{P_1}(z_1) = 1$ and every edge of $H$ must be incident to a main vertex, we can assume without loss of generality that $x$ and $m$ are the main vertices of $H$ for some vertex $m$. Since $xz_1 \in E(H)$ and $d_{P_1}(z_1) = 1$, the edge that is missing to complete a $\hat{K}_{2,3}$ must be $mz_1$. Hence, $P_2$ already built a $\hat{K}_{2,2}$ with main vertices $x$ and $m$ before $P_1$ took $yz_1$ contradicting our assumption.

    $P_1$ now claims $yz_2$ and $yz_3$ in her next two moves for fresh vertices $z_2$ and $z_3$. $P_2$ has to block the threats of $P_1$ by taking $xz_2$ and $xz_3$, respectively, because he does not create a threat himself as argued above.
    Since $P_2$ does not have a threat after claiming $xz_3$, $P_2$ needs at least two more moves to claim a copy of $\hat{K}_{2,3}$, but it is $P_1$'s turn and with the labeling from \Cref{pic:finalwin}, she wins by claiming two out of $az_1, az_2$ and $az_3$ in her next two moves.
\end{proof}
We are going to trace back all board states, which occur in a game of $\R(K_{\aleph_0}, \hat{K}_{2,3})$ in which $P_1$ is following her strategy to \Cref{lem:EndPosition} beginning with the following.
\begin{lemma}
    \label{lem:triangle}
    If $P_1$ claims a $K_3$ in her first three moves in $\R(K_{\aleph_0}, \hat{K}_{2,3})$, she has a winning strategy.
\end{lemma}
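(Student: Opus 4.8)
The plan is to reduce to \Cref{lem:EndPosition}: I will have $P_1$ grow her triangle into a $\hat{K}_{2,2}$, and the two conditions of that lemma will hold automatically because $P_2$ is still far behind. Denote by $a,b,c$ the vertices of the triangle $P_1$ has just completed, so at this moment $P_1$ owns $3$ edges and $P_2$ owns $2$.

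The key step is for $P_1$ to claim $ad$ for a fresh vertex $d$ (possible since only finitely many edges have been played). This guarantees that $P_1$ can complete a $\hat{K}_{2,2}$ on her following move in either of two ways: claiming $bd$ yields a $\hat{K}_{2,2}$ with main vertices $a,b$ (and degree-two vertices $c,d$), while claiming $cd$ yields a $\hat{K}_{2,2}$ with main vertices $a,c$. Because $d$ is fresh, both $bd$ and $cd$ are unclaimed, so whatever $P_2$ does next he can occupy at most one of them; $P_1$ then takes the other and owns a full copy of $\hat{K}_{2,2}$.

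It remains to enter \Cref{lem:EndPosition} on $P_1$'s turn with its two conditions verified, and this parity-and-count coordination is the only delicate point. Since $P_1$ must complete the $\hat{K}_{2,2}$ on her own move, the turn passes to $P_2$; I would let $P_2$ make this move and then apply the lemma on $P_1$'s next turn, so one must check that $P_2$'s extra move cannot spoil the hypotheses. The decisive observation is that any threat for $\hat{K}_{2,3}$ is a copy of $\hat{K}_{2,3}-e$ and hence consists of $6$ edges, whereas a move count shows that when $P_1$ is about to invoke \Cref{lem:EndPosition} the player $P_2$ has claimed only $5$ edges. Therefore $P_2$ has no threat and has claimed at most $7$ edges, both conditions hold, it is $P_1$'s turn, and she owns a $\hat{K}_{2,2}$, so \Cref{lem:EndPosition} wins the game for her. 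The same bound shows $P_1$ never needs to deviate to defend, since $P_2$ holds fewer than $6$ edges throughout and can therefore create neither a copy of $\hat{K}_{2,3}$ nor even a threat; the implication that fewer than $6$ owned edges forces the absence of a threat is exactly where the slack in the constant $7$ of \Cref{lem:EndPosition} is used.
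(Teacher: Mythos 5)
Your proposal is correct and follows essentially the same route as the paper: extend the triangle by an edge $ad$ to a fresh vertex, creating the double threat $bd$/$cd$ so that $P_1$ owns a $\hat{K}_{2,2}$ after her fifth move, and then invoke \Cref{lem:EndPosition} after $P_2$'s fifth move, at which point he holds only $5$ edges and so can have no $6$-edge threat graph. The paper merely compresses the double-threat construction into the single word ``observe,'' so your write-up is the same argument with the details filled in.
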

\begin{proof}
    Observe that, $P_1$ can claim a copy of $\hat{K}_{2,2}$ in her next two moves. Since $P_2$ does not have a threat after his fifth move, due to \Cref{lem:EndPosition}, $P_1$ has a winning strategy.
\end{proof}

\begin{lemma}
    \label{lem:Mainlem}
    If $P_1$ has constructed the graph from \autoref{pic:trianglewithedge} in $\R(K_{\aleph_0}, \hat{K}_{2,3})$ after his fourth move, such that there exists an edge $e^*$ of $P_2$, that satisfies either $a \in e^*$ or \linebreak ($e^* \cap \{a,b,c,d\} = \emptyset$ and after his fourth move, we are not in the board state \autoref{pic:SpCase1} or \autoref{pic:SpCase2}), then $P_1$ has a winning strategy.
\end{lemma}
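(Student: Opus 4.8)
The plan is to reduce everything to \Cref{lem:EndPosition}: $P_1$ will turn her triangle‑with‑a‑pendant‑edge into a copy of $\hat{K}_{2,2}$ as quickly as possible and then invoke the endgame lemma. The decisive quantity is how many edges $P_2$ has claimed at the moment $P_1$ first owns a $\hat{K}_{2,2}$ on her turn: a $P_2$‑threat is a copy of $\hat{K}_{2,3}-e$ and hence needs six edges, so $P_2$ has no threat as long as he has claimed at most five. Thus, if $P_1$ can produce her $\hat{K}_{2,2}$ while $P_2$ still has only five edges, \Cref{lem:EndPosition} finishes the game. First I would record the two \emph{one‑move completions}: since $P_1$'s graph is a triangle together with one pendant edge, there are exactly two edges at the pendant vertex, each of which on its own extends $P_1$'s graph to a copy of $\hat{K}_{2,2}$ (the pendant vertex becomes a leaf and two triangle vertices become the main vertices).

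The easy case is when $P_2$ has not claimed both of these completion edges. Then $P_1$ takes a free one on her fifth move; she now owns a $\hat{K}_{2,2}$, $P_2$ has claimed only four edges, and after his next move he has five and hence no threat, so \Cref{lem:EndPosition} applies on $P_1$'s turn and she wins. This case needs no assumption on $e^*$. The substantial case is when $P_2$ has already claimed both one‑move completions, costing him two of his four edges. Now $P_1$ must route her $\hat{K}_{2,2}$ through a fresh vertex: she plays an edge from a fresh vertex $e$ to a triangle vertex, creating a double threat to complete a $\hat{K}_{2,2}$ (two different second edges from $e$ each finish a copy), so whatever $P_2$ does she completes the $\hat{K}_{2,2}$ on her sixth move. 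The cost is one extra tempo: by the time it is $P_1$'s turn with the $\hat{K}_{2,2}$ in hand, $P_2$ has claimed six edges, exactly the threshold at which a threat becomes possible.

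Ruling out such a $P_2$‑threat is the heart of the argument, and this is where the hypothesis on $e^*$ enters. A threat graph of $P_2$ is a copy of $\hat{K}_{2,3}-e$, all of whose edges meet two common main vertices; moreover $P_1$ owns all three edges of the triangle, so none of them is available to $P_2$, and the fresh vertex $e$ (together with any further leaf $P_2$ would need) appears too late for him to collect the incidences a threat requires in the available moves. In the hard case $P_2$ has already spent two edges at the pendant vertex, and the hypothesis supplies a further $P_2$‑edge $e^*$ that is either incident to $a$, or disjoint from $\{a,b,c,d\}$ while the board avoids \autoref{pic:SpCase1} and \autoref{pic:SpCase2}; in either situation $e^*$ cannot also be part of a threat graph assembled in time, so at the critical moment at most five of $P_2$'s six edges are usable and he has no threat. \Cref{lem:EndPosition} then applies and $P_1$ wins. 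The two excluded board states are precisely the arrangements of $P_2$'s four edges for which this tempo bookkeeping breaks down, i.e.\ in which $P_2$ could force a threat of his own during $P_1$'s construction.

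I expect the main obstacle to be this last case analysis. With several of $P_2$'s edges left unspecified (the ``$+k$'' edges of the figures), one must check that in every arrangement other than the two special states the forced $\hat{K}_{2,2}$ arrives before any $P_2$‑threat, and that the spare edge $e^*$ really denies $P_2$ a sixth useful edge. The accompanying verification that $P_2$ can never complete a copy of $\hat{K}_{2,3}$ before $P_1$ reaches the hypotheses of \Cref{lem:EndPosition} is a routine but case‑heavy edge count, which I would carry out by distinguishing the possible incidences of $P_2$'s undrawn edges with $\{a,b,c,d\}$ and with the fresh vertex $e$.
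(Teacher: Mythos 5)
Your overall plan is the same as the paper's: if one of the two completion edges $bd$, $cd$ is free, $P_1$ takes it and wins by \Cref{lem:EndPosition} with $P_2$ holding only five edges; otherwise she plays $az$ to a fresh vertex $z$, creating the double threat $bz$/$cz$, completes a $\hat{K}_{2,2}$ on her sixth move, and must then rule out a threat of $P_2$ after his sixth move, which is exactly the threshold at which a copy of $\hat{K}_{2,3}-e$ (six edges) first becomes possible. Up to that point your tempo counting is correct.

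The gap is in the last step, which is the only place the hypothesis on $e^*$ and the exclusion of \autoref{pic:SpCase1} and \autoref{pic:SpCase2} can enter, and your sketch of it is not just incomplete but pointed the wrong way. If $P_2$ has a threat graph $H$ after his sixth move, then \emph{all six} of his edges lie in $E(H)$; since $bd,cd\in E(H)$ and $bc$ belongs to $P_1$, the vertex $d$ must be a main vertex of $H$. In the case $a\in e^*$ one does get the conclusion you want ($a\in V(H)$ is impossible because $ad\in E(P_1)$). But in the case $e^*=xy$ with $xy\cap\{a,b,c,d\}=\emptyset$, it is false that ``$e^*$ cannot be part of a threat graph assembled in time'': on the contrary, $e^*$ \emph{must} lie in $E(H)$, and that is precisely what makes the argument work. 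It forces $x$ (say) to be the second main vertex, hence $V(H)=\{b,c,d,x,y\}$, so $H$ must contain three of the four edges $dx,dy,bx,cx$ in addition to $bd,cd,xy$; since $P_2$'s fifth move is spent blocking at $z\notin V(H)$ (or, if he does not block, his remaining budget is still only three edges), his \emph{fourth} edge must already be one of $dx,dy,bx,cx$, and each of these four choices produces exactly the board state of \autoref{pic:SpCase1} or \autoref{pic:SpCase2}, which the hypothesis excludes. Your proposal never identifies this mechanism — it asserts that $e^*$ is ``unusable'' for $P_2$ and that the two special cases are ``where the bookkeeping breaks down'' without deriving either claim — so the heart of the lemma, the reduction of the no-threat condition to the excluded pictures, is missing.
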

\begin{figure}[H]
    \centering
    \begin{subfigure}[t]{0.19\textwidth}
        \centering
        \includegraphics[height= 20mm, keepaspectratio]{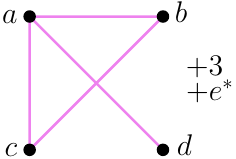}
        \caption{General case}
        \label{pic:trianglewithedge}
    \end{subfigure}
    \hspace{10mm}
    \begin{subfigure}[t]{0.25\textwidth}
        \centering
        \includegraphics[height= 20mm, keepaspectratio]{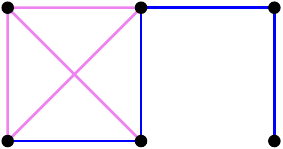}
        \caption{Special case 1}
        \label{pic:SpCase1}
    \end{subfigure}
    \hspace{10mm}
    \begin{subfigure}[t]{0.25\textwidth}
        \centering
        \includegraphics[height= 20mm, keepaspectratio]{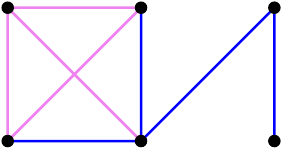}
        \caption{Special case 2}
        \label{pic:SpCase2}
    \end{subfigure}
    \caption{}
\end{figure}
\begin{proof}
    Suppose $P_1$ has claimed the graph from $\autoref{pic:trianglewithedge}$ with the given vertex labeling. If $P_2$ has not taken $bd$ and $cd$ after his fourth move, $P_1$ wins by \Cref{lem:EndPosition} after taking one of them (as in \Cref{lem:triangle}).
    Otherwise, $P_1$ claims $az$ for a fresh vertex $z$ and $zc$ afterwards up to relabeling vertices (\Cref{pic:maincase}). All that is left to show in order to apply \Cref{lem:EndPosition} is that after his sixth move, $P_2$ has no threat.
    \begin{figure}[H]
        \centering
        \includegraphics[width=0.24\textwidth]{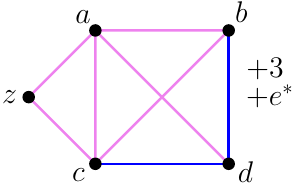}
        \caption{The board after six moves by both players}
        \label{pic:maincase}
    \end{figure}
    Assume for a contradiction that $P_2$ has built a threat graph $H$. Then, all of his six edges must be in $E(H)$. Since $bc$ is taken by $P_1$, $d$ must be a main vertex of $H$. If $a \in e^*$, then $a \in V(H)$, which is not possible, because $ad \in E(P_1)$.

    So suppose that $e^* \cap \{a,b,c,d\} = \emptyset$. Let $e^* = xy$ and without loss of generality, let $x$ be the second main vertex of $H$. Hence $V(H)= \{b,c,d,x,y\}$, so the $\hat{K}_{2,3}$, which $P_2$ threatens to complete is uniquely determined and $E(H)$ must contain three of the four edges $dx, dy, bx, cx$.
    \begin{figure}[H]
        \centering
        \includegraphics[width=0.325\textwidth]{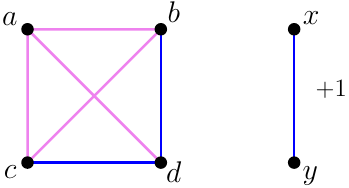}
        \caption{The critical board state after the fourth move of $P_2$.}
        \label{pic:problematicCase}
    \end{figure}
    Therefore, $P_2$ must have claimed $bd, cd, xy$ and one of the edges from $\{dx, dy, bx, cx\}$ after his fourth move. In all four cases, one can check that we obtain the board state from \autoref{pic:SpCase1} or \autoref{pic:SpCase2}. Hence, $P_1$ obtains a winning strategy by \Cref{lem:EndPosition}.
\end{proof}

\begin{lemma}
    \label{lem:SpCase1}
    In the game $\R(K_{\aleph_0}, \hat{K}_{2,3})$, $P_1$ has a winning strategy in the board state from \autoref{pic:SpCase1}.
\end{lemma}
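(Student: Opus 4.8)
The plan is to reduce to \Cref{lem:EndPosition} by having $P_1$ build a $\hat{K}_{2,2}$ within two further moves while simultaneously neutralising $P_2$'s only dangerous configuration. With the labelling of \autoref{pic:SpCase1}, $P_1$ has claimed the triangle on $\{a,b,c\}$ together with $ad$, and $P_2$ has claimed $bd$, $cd$, $dx$ and $xy$; it is $P_1$'s (fifth) move. The key observation is that $P_2$'s four edges all lie in the unique copy of $\hat{K}_{2,3}$ with main vertices $d,x$ and leaves $b,c,y$, whose edge set is $\{dx,db,dc,dy,bx,cx,xy\}$, and that the two leaf-edges $bx$ and $cx$ which $P_2$ still needs at $x$ are exactly the two edges with which $P_1$ could complete a $\hat{K}_{2,2}$ on main vertices $a,b$ or on main vertices $a,c$ once she owns $ax$.

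So first I would let $P_1$ play $ax$. After this move $P_1$ can complete a $\hat{K}_{2,2}$ in a single move in two different ways: via $bx$ (main vertices $a,b$, leaves $c,x$) or via $cx$ (main vertices $a,c$, leaves $b,x$). Since $P_2$ has only five edges after his reply, he has no threat, because a threat graph already consists of six edges; in particular he cannot force $P_1$ to deviate, and his single reply can block at most one of $bx,cx$. On her sixth move $P_1$ therefore completes a $\hat{K}_{2,2}$ by claiming whichever of $bx,cx$ is still free. The crucial point is that this same edge is one of the two leaf-edges $P_2$ needs at $x$, so by claiming it $P_1$ also destroys the copy of $\hat{K}_{2,3}$ on main vertices $d,x$, which is the only copy $P_2$ was anywhere near completing.

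The step I expect to be the main obstacle is verifying that $P_2$ still has no threat after his sixth move, so that \Cref{lem:EndPosition} applies on $P_1$'s seventh move. Here I would argue that $P_2$'s six edges are too concentrated on $d$ and $x$ to contain six edges of any copy of $\hat{K}_{2,3}$ other than the one on main vertices $d,x$ (every candidate pair of main vertices other than $d,x$ has at most one common $P_2$-neighbour), and that this remaining copy is dead, since $P_1$ owns $ax$ together with one of $bx,cx$, so its missing leaf-edge at $x$ can never be completed by $P_2$. Finally I would check the bookkeeping: $P_1$ holds a $\hat{K}_{2,2}$, it is her turn, and $P_2$ has claimed only $6 \le 7$ edges and has no threat, so \Cref{lem:EndPosition} yields a winning strategy for $P_1$.
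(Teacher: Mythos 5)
There is a genuine gap, starting with a misidentification of the board state. In \autoref{pic:SpCase1}, $P_2$'s fourth edge is $bx$ (a leaf-edge of the dangerous $\hat{K}_{2,3}$ at $x$), not $dx$; the configuration with $dx$ is \autoref{pic:SpCase2}, which the main theorem's strategy is designed never to reach. This matters because your opening move $ax$ relies on having \emph{two} one-move completions $bx$ and $cx$ afterwards, but in the actual position $P_2$ already owns $bx$, so after $ax$ your only one-move completion is $cx$, which $P_2$ simply blocks on his fifth move (and in doing so he reaches five edges of the $\hat{K}_{2,3}$ on main vertices $d,x$, leaving you on the defensive). This is exactly why the paper reverses the order: $P_1$ takes $cx$ \emph{first} -- the unique still-available leaf-edge at $x$ -- and only then goes for $ax$, with the fallback $az$, $zb$/$zc$ to a fresh vertex $z$ if $P_2$ blocks $ax$.

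The argument also fails on its own terms for the configuration you actually analyse ($P_2$ holding $bd,cd,dx,xy$). Your claim that every candidate pair of main vertices other than $d,x$ has at most one common $P_2$-neighbour is false: $P_2$ can decline to block either of $bx,cx$ and instead play $dy$ on his fifth move and $by$ on his sixth, after which $d$ and $y$ have common $P_2$-neighbours $x$ and $b$, and his six edges $bd,cd,dx,xy,dy,by$ form the $\hat{K}_{2,3}$ with main vertices $d,y$ and leaves $b,c,x$ minus the unclaimed edge $yc$. That is a threat (none of $dy,yb,yc$ is touched by $P_1$'s edges), so the hypothesis of \Cref{lem:EndPosition} is violated at exactly the point where you invoke it. So both the move order and the ``no threat after his sixth move'' verification need to be repaired; the correct repair is essentially the paper's proof.
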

\begin{proof}
    For convenience label the vertices from \Cref{pic:SpCase1} as in \Cref{pic:problematicCase} (where the edge $bx$ is missing). $P_1$ begins by claiming $cx$. If $P_2$ does not pick $ax$, $P_1$ builds a $\hat{K}_{2,2}$ by taking $ax$ (\autoref{pic:SpCase1_1}).
    If $P_2$ took $ax$, $P_1$ would claim $az$ for a fresh vertex $z$ and in her next move, she would claim $zb$ (\autoref{pic:SpCase1_2}), if possible and $zc$ otherwise (\autoref{pic:SpCase1_3}). In all three cases, all that is left to show in order to apply \Cref{lem:EndPosition} is that $P_2$ does not have a threat. So assume for a contradiction that $P_2$ had a threat graph $H$ in one of the three figures below.
    \begin{figure}[H]
        \centering
        \begin{subfigure}[t]{0.29\textwidth}
            \centering
            \includegraphics[height= 27mm, keepaspectratio]{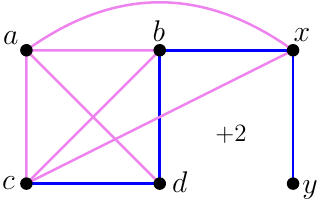}
            \caption{$P_1$ claimed a copy of $\hat{K}_{2,2}$ with main vertices $a$ and $c$.}
            \label{pic:SpCase1_1}
        \end{subfigure}
        \hfill
        \begin{subfigure}[t]{0.31\textwidth}
            \centering
            \includegraphics[height= 27mm, keepaspectratio]{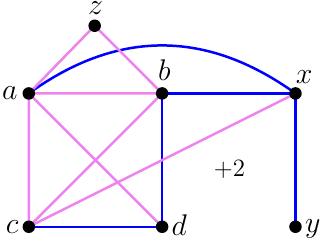}
            \caption{$P_1$ claimed a copy of $\hat{K}_{2,2}$ with main vertices $a$ and $b$.}
            \label{pic:SpCase1_2}
        \end{subfigure}
        \hfill
        \begin{subfigure}[t]{0.32\textwidth}
            \centering
            \includegraphics[height= 27mm, keepaspectratio]{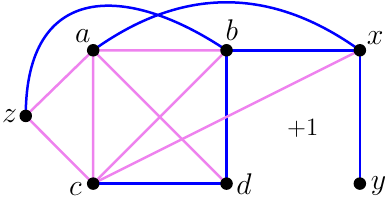}
            \caption{$P_1$ claimed a copy of $\hat{K}_{2,2}$ with main vertices $a$ and $c$.}
            \label{pic:SpCase1_3}
        \end{subfigure}
        \caption{}
        \label{pic:SpCase1Proof}
    \end{figure}
    In \autoref{pic:SpCase1_1}, the only possible choice for main vertices of $H$ would be $d,x$, since every edge has to be incident to a main vertex and all four blue edges must be contained in $E(H)$. But this choice is invalid, because $cx$ is taken by $P_1$. Hence, $P_2$ does not have a threat.

    Let $S \subseteq E(P_2)$ be the set of blue edges drawn in \autoref{pic:SpCase1_2}. We show that every subset $S' \subseteq S$ of four blue edges cannot lie in $H$ by verifying that in each case there is no valid set of main vertices. If $cd \in S'$, then either $c$ or $d$ has to be a main vertex of $H$. Since two edges of $\{ax, bx, xy\}$ must be contained in $S'$, $x$ has to be the other main vertex. But $cx \in E(P_1)$ contradicts that $x$ is a main vertex. If $cd \notin S'$, we either have $b,x$ or $d,x$ as main vertices. $b, x$ cannot be main vertices, since $ab \in E(P_1)$ and $d,x$ cannot be main vertices, since $ad \in E(P_1)$. Hence, $P_2$ does not have a threat in this case.

    In \autoref{pic:SpCase1_3}, $P_2$ does not have a threat, because any set $S$ of five blue edges covers at least six vertices, but $|V(\hat{K}_{2,3})| = 5$, so $S$ cannot be contained in $E(\hat{K}_{2,3})$.
\end{proof}
With those auxiliary lemmas, we are finally able to prove the our main theorem.
\begin{proof}[Proof of \Cref{thm:K23}]
    $P_1$ begins by claiming an edge $ab$. Let $e^*$ be the first edge claimed by $P_2$. Without loss of generality, we can assume that $a \in e^*$ or $e^* \cap \{a,b\} = \emptyset$. $P_1$ takes $ad$ for a fresh vertex $d$. $P_2$ has to claim $bd$ on his next move, because $P_1$ wins by \Cref{lem:triangle} otherwise. $P_1$ takes $ac$ for a fresh vertex $c$.
    \begin{figure}[H]
        \centering
        \includegraphics[width=0.24\textwidth]{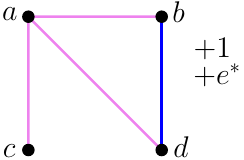}
        \caption{The board state after three moves by each player}
        \label{pic:GC1}
    \end{figure}
    If $a \in e^*$, $P_1$ claims without loss of generality $bc$ in her next move, which is winning for $P_1$ by \Cref{lem:Mainlem}. Therefore, we can assume that $e^* = xy$.
    Almost every possible way of drawing the unspecified edge of $P_2$ in \Cref{pic:GC1} yields a board state in which $P_1$ can force a win by \Cref{lem:Mainlem} after claiming $bc$. The only exceptions are depicted in \Cref{pic:criticalCases} (up to symmetry). 
    \begin{figure}[H]
        \centering
        \begin{subfigure}[t]{0.2\textwidth}
            \centering
            \includegraphics[height= 33mm, keepaspectratio]{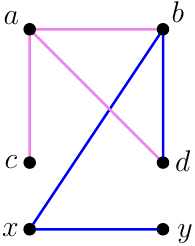}
            \caption{}
            \label{pic:SpCase1MainThm1}
        \end{subfigure}
        \hspace{10mm}
        \centering
        \begin{subfigure}[t]{0.2\textwidth}
            \centering
            \includegraphics[height= 33mm, keepaspectratio]{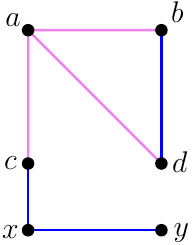}
            \caption{}
            \label{pic:SpCase1MainThm2}
        \end{subfigure}
        \hspace{10mm}
        \centering
        \begin{subfigure}[t]{0.2\textwidth}
            \centering
            \includegraphics[height= 33mm, keepaspectratio]{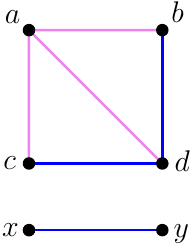}
            \caption{}
            \label{pic:MainThmBadCase}
        \end{subfigure}
        \caption{The critical cases, which can occur on the board (\Cref{pic:GC1}) up to symmetry.}
        \label{pic:criticalCases}
    \end{figure}

    In the cases \hyperref[pic:SpCase1MainThm1]{(a)} and \hyperref[pic:SpCase1MainThm2]{(b)}, $P_1$ takes $bc$. If $P_2$ does not claims $cd$, $P_1$ can force a win by taking $cd$, since she has claimed a copy of $\hat{K}_{2,2}$ in her first five moves (see \Cref{lem:triangle}). If $P_2$ takes $cd$, $P_1$ wins by \Cref{lem:SpCase1}.

    In case \hyperref[pic:MainThmBadCase]{(c)}, $P_1$ claims $az$ for a fresh vertex $z$. If $P_2$ takes $bz$ or $cz$, we can assume that $P_2$ took $bz$ by symmetry, then $P_1$ claims $cz$ (\autoref{pic:Case3_1}). If $P_2$ claims $dz$, $P_1$ claims $cz$ (\autoref{pic:Case3_2}). In all other cases, $P_1$ claims $dz$ in her next move (\autoref{pic:Case3_3}).
    \begin{figure*}[h]
        \centering
        \hspace{15mm}
        \begin{subfigure}[t]{0.21\textwidth}
            \centering
            \includegraphics[height= 40mm, keepaspectratio]{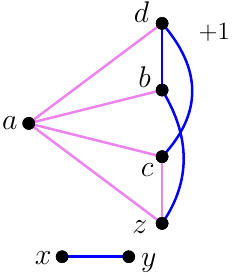}
            \caption{}
            \label{pic:Case3_1}
        \end{subfigure}
        \hfill
        \begin{subfigure}[t]{0.22\textwidth}
            \centering
            \includegraphics[height= 40mm, keepaspectratio]{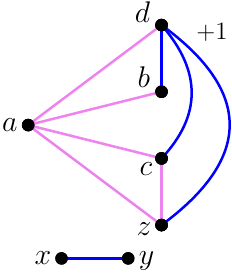}
            \caption{}
            \label{pic:Case3_2}
        \end{subfigure}
        \hfill
        \begin{subfigure}[t]{0.22\textwidth}
            \centering
            \includegraphics[height= 40mm, keepaspectratio]{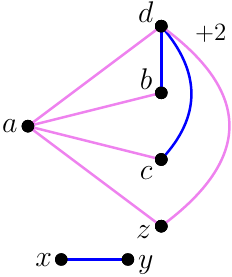}
            \caption{}
            \label{pic:Case3_3}
        \end{subfigure}
        \hspace{15mm}
        \caption{}
        \label{pic:Case3_all}
    \end{figure*}
    In order to complete a $\hat{K}_{2,2}$, $P_1$ claims either $dz$ or $bc$ in \Cref{pic:Case3_1}, $bz$ or $bc$ in \Cref{pic:Case3_2} and without loss of generality $cz$ in \Cref{pic:Case3_3}, since $P_2$ cannot have claimed both $bz$ and $cz$. In all of the cases, $P_1$ has claimed a $\hat{K}_{2,2}$ within her first 6 moves, so in order to apply \Cref{lem:EndPosition}, we will show that $P_2$ has no threat after his sixth move. Assume he had a threat, then all of his edges must be contained in his threat graph.

    In the case of \autoref{pic:Case3_1} and \autoref{pic:Case3_2}, the graph spanned by edges of $P_2$ contains at least $6$ vertices, but $\hat{K}_{2,3}$ has only $5$. Finally, in the case of \autoref{pic:Case3_3}, note that that $P_2$ must have claimed $bz$, otherwise he loses on $P_1$'s next turn, because she has claimed $cz$. Then, $P_2$ does not have a valid choice of main vertices as in \Cref{pic:Case3_1}. Hence $P_2$ does not have a threat in all cases and by \Cref{lem:EndPosition}, this completes the proof.
\end{proof}

\printbibliography

\end{document}